\documentclass[11pt,reqno]{amsart}
\usepackage[a4paper,margin=30mm]{geometry}
\usepackage{amsmath,amssymb,mathtools}
\usepackage{microtype}
\usepackage[colorlinks=true,linkcolor=blue,citecolor=blue,urlcolor=blue]{hyperref}
\hypersetup{pdftitle={Counterexamples to prescribed scalar curvature in the null case},pdfauthor={}}

\newtheorem{theorem}{Theorem}[section]
\newtheorem{corollary}[theorem]{Corollary}
\newtheorem{lemma}[theorem]{Lemma}
\theoremstyle{remark}
\newtheorem{remark}[theorem]{Remark}
\newcommand{\T}{\mathbb T}
\newcommand{\R}{\mathbb R}
\newcommand{\Z}{\mathbb Z}
\newcommand{\dd}{\,dx}
\newcommand{\av}{\operatorname{avg}}
\newcommand{\diver}{\operatorname{div}}
\newcommand{\Hess}{\operatorname{Hess}}
\newcommand{\sym}{\operatorname{sym}}

\numberwithin{equation}{section}

\author[Y. Li, Y. Wu, J. Zhou]{Yuxiang Li, Yunqing Wu, Jie Zhou}
\address{
	Department of Mathematical Sciences\\
	Tsinghua University, Beijing, People's Republic of China}
\email{liyuxiang@tsinghua.edu.cn}
\address{
	The Institute of Geometry and Physics\\
	University of Science and Technology of China, Hefei, Anhui, People's Republic of China}
\email{yqwu19@ustc.edu.cn}
\address{
	School of Mathematical Sciences \\
	Capital Normal University, Beijing, People's Republic of China}
\email{zhoujiemath@cnu.edu.cn}
\title[Prescribed scalar curvature in the null case]{Counterexamples to prescribed scalar curvature in the null case}
\date{}
\subjclass[2020]{Primary 53C21; Secondary 35J60, 35B33, 58J05}
\keywords{Prescribed scalar curvature, scalar-flat metric, conformal deformation}

\begin{document}
\begin{abstract}
We construct smooth functions on  the flat torus $\T^{n\ge5}$  that change sign and have negative mean but cannot be realized as the scalar curvature of any metric in the given conformal class.
\end{abstract}
\maketitle

\section{Introduction}

Let $(M^{n\ge 3},g)$ be a closed connected Riemannian manifold  and  $\Delta_g=\diver_g\nabla$. A conformal metric $\widetilde g=w^{4/(n-2)}g$  has scalar curvature $f$ if and only if 
\begin{equation}\label{eq:conformal}
 L_gw=fw^p,\qquad L_g=-c_n\Delta_g+R_g,\qquad
 c_n=\frac{4(n-1)}{n-2},\quad p=\frac{n+2}{n-2}.
\end{equation}
The sign of the first eigenvalue of the conformal Laplacian $L_g$ is an invariant of the conformal class and separates the problem into positive, null, and negative cases. In the null case, a positive first eigenfunction supplies a scalar-flat representative of the conformal class. We fix such a representative $g$, so $R_g\equiv0$. After setting $u=c_n^{-1/(p-1)}w$, equation \eqref{eq:conformal} becomes
\begin{equation}\label{eq:normalized}
 -\Delta_gu=fu^p,\qquad u>0.
\end{equation}

Two necessary conditions follow immediately. Integration of \eqref{eq:normalized} gives $\int_Mfu^p\,dV_g=0$, so every nonzero prescribed function must change sign. Multiplication by $u^{-p}$ and integration by parts give
\begin{equation}\label{eq:negative-mean}
 \int_M f\,dV_g
 =-p\int_Mu^{-p-1}|\nabla u|_g^2\,dV_g<0
 \quad\text{if }f\not\equiv0.
\end{equation}
Thus the natural candidate characterization in the null case is
\begin{equation}\label{eq:conditions}
 f\equiv0,\qquad\text{or}\qquad
 f\text{ changes sign and }\int_M f\,dV_g<0.
\end{equation}
 The null-case prescribed scalar curvature question considered here asks whether these necessary conditions are also sufficient.

In low dimensions, these conditions give an existence criterion. For a closed surface with zero Euler characteristic, Kazdan and Warner proved the corresponding statement for Gaussian curvature in a fixed conformal class, expressed in a flat representative \cite{KW74}. In dimensions three and four, Escobar and Schoen proved that, on a closed scalar-flat background, every nonzero smooth function satisfying \eqref{eq:conditions} is the scalar curvature of a pointwise conformal metric \cite{ES86}. The present paper shows that these two elementary conditions do not remain sufficient in dimensions $n\geq5$, already in the standard flat conformal class on a torus.

Let $\T^n=\R^n/\Z^n$ be equipped with the standard flat metric $g_0$, so that its volume is one. Fix $x_0\in\T^n$ and an embedded Euclidean ball $B_r(x_0)$. A function $\phi\in C^\infty(\T^n)$ will be called admissible if
\begin{equation}\label{eq:admissible}
 \phi\geq0,\qquad \phi^{-1}(0)=\{x_0\},\qquad
 \phi(x)=|x-x_0|^2\quad\text{on }B_r(x_0).
\end{equation}

\begin{theorem}\label{thm:main}
Let $n\geq5$ and let $\phi$ be admissible. There exists $A_0>0$, depending only on the fixed torus and $\phi$, such that, for every $A\geq A_0$, the function
\[
 f_A=1-A\phi
\]
changes sign, has negative integral, and the equation
\begin{equation}\label{eq:main-pde}
 -\Delta_{g_0}u=f_Au^{(n+2)/(n-2)}
\end{equation}
has no positive smooth solution.
\end{theorem}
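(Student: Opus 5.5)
The first two assertions are elementary and I would dispose of them first. Since $\phi(x_0)=0$, we have $f_A(x_0)=1>0$ for every $A$. Since $\phi\ge0$ with a single zero, $\int_{\T^n}\phi\,dx>0$, so $\int f_A\,dx=1-A\int\phi\,dx<0$ as soon as $A>(\int\phi)^{-1}$. This also forces $f_A<0$ somewhere, so $f_A$ changes sign. The content of the theorem is therefore the nonexistence statement. I would prove it by contradiction: take a positive solution $u$ of \eqref{eq:main-pde} with $A$ large, and pass to the natural blow-up scale.

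The rescaling comes from the quadratic normalisation of $\phi$ in \eqref{eq:admissible}. On $B_r(x_0)$ we have $f_A=1-A|x-x_0|^2$, so the region $\{f_A>0\}$ is the ball of radius $A^{-1/2}$ about $x_0$. Set
\[
v(y)=A^{-(n-2)/4}u\bigl(x_0+A^{-1/2}y\bigr).
\]
Then $-\Delta v=(1-|y|^2)v^p$ on the ball $|y|<r\sqrt A$. Outside $B_r(x_0)$ we have $f_A\le 1-Ac$ for some $c=c(\phi)>0$, so $f_A$ is uniformly very negative there. The aim is to show that, for $A$ large, no positive $v$ can satisfy this equation on a ball of radius $R_A=r\sqrt A\to\infty$ with the boundary behaviour inherited from the torus.

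The core mechanism is a Keller--Osserman type blow-up for spherical means. Let $\bar v(R)$ be the mean of $v$ over $\partial B_R$.
\begin{itemize}
\item Integrating the equation over $B_R$ gives
\[
R^{n-1}\omega_{n-1}\bar v'(R)=\int_{B_R}(|y|^2-1)v^p\,dy.
\]
\item The right-hand side becomes positive once the negative part dominates the positive part, which is supported in $B_1$. After that point, $\bar v$ is increasing.
\item For $R>1$, Jensen's inequality (convexity of $t\mapsto t^p$) gives $\Delta\bar v\ge (R^2-1)\bar v^p$.
\end{itemize}
An increasing positive solution of $\bar v''+\frac{n-1}{R}\bar v'\ge (R^2-1)\bar v^p$ with $p>1$ blows up at a finite radius. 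That radius is bounded in terms of a lower bound on $\bar v$ at some initial radius $R_0$. If this blow-up radius is smaller than $R_A$, we contradict the smoothness of $u$ on $\T^n$. Outside $B_r(x_0)$ the same comparison applies with the even larger negative coefficient $Ac$, so the argument extends across the whole torus.

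The step I expect to be the main obstacle is a uniform quantitative lower bound on $v$ near the origin, that is, ruling out solutions that are very small at the scale $A^{-1/2}$. The equation is invariant under $v\mapsto\lambda^{(n-2)/2}v(\lambda\,\cdot)$ up to changing the profile of $f$. A small $v$ could therefore hide an actual bubble concentrating at a smaller scale, or be nearly constant and tiny. To exclude both, I would use the constraint $\int f_Au^p=0$ together with \eqref{eq:negative-mean}. The positive mass $\int_{\{f_A>0\}}u^p$ must balance $A\int\phi\,u^p$. Because $u$ is superharmonic on $\{f_A>0\}$ and subharmonic elsewhere, a Harnack/comparison argument should bound $\inf u$ from below by a multiple of $\sup_{B_{A^{-1/2}}}u$. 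Feeding a bubble profile $\sim|y|^{2-n}$ into the weighted integral $\int|y|^2v^p$ then shows that the negative contribution dominates, with the powers of $A$ working in our favour. I expect the exponent bookkeeping here to use $n\ge5$ essentially, much as $\int|y|^2 U^{p+1}$ converges for the standard bubble $U$ exactly in high dimensions. This would give the required $A$-independent lower bound on $\bar v(R_0)$, and choosing $A_0$ so that the blow-up radius is below $r\sqrt{A_0}$ finishes the proof.
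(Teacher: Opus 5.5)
Your handling of the sign change and of $\int f_A<0$ is fine. The nonexistence mechanism, however, breaks at a concrete step: the claim that $\int_{B_R}(|y|^2-1)v^p\,dy$ eventually becomes positive, so that $\bar v$ starts to increase. On the closed torus this never happens. After rescaling, $v$ is a positive function on $\R^n/\sqrt A\,\Z^n$, so $\int\Delta v\,dy=0$. For $A$ large, $\Delta v=(A\phi-1)v^p$ (with $\phi$ evaluated at $x_0+A^{-1/2}y$) is strictly positive off $\overline{B_1}$. Hence, for every $1\le R<\sqrt A/2$,
\[
|\partial B_R|\,\bar v'(R)=\int_{B_R}\Delta v\,dy=-\int_{\text{complement of }B_R}\Delta v\,dy<0 .
\]
The same sign holds trivially for $R\le1$, where $\Delta v<0$. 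This is just $\int_{\T^n}f_Au^p=0$ read through spherical means: they are strictly decreasing on the entire range where they are defined. Your inequality $\Delta\bar v\ge(R^2-1)\bar v^p$ is therefore only ever applied to a decreasing function. It is compatible with positive decreasing solutions on arbitrarily long intervals, whatever the size of $\bar v(R_0)$. So Keller--Osserman yields no blow-up here. On the subharmonic region its natural output is an upper bound for $u$ away from $x_0$, not a contradiction; the paper uses a bound of this kind, $u\le C(P/A)^{1/p}$ off $B_{r/4}$ with $P=\int u^p$. In particular, the uniform lower bound on $\bar v(R_0)$ that you single out as the main obstacle would not close the argument even if proved. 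The Harnack comparison you sketch for it is also not justified, since the coefficient $(A\phi-1)u^{p-1}$ is not uniformly controlled.

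There is also a structural reason this route cannot work: nothing in your blow-up mechanism uses $p=\frac{n+2}{n-2}$. For subcritical $1<p<\frac{n+2}{n-2}$, minimize $\int|\nabla u|^2$ on $\{\int f_A|u|^{p+1}\,dx=1\}$. Minimizing sequences stay bounded in $H^1$ because $\int f_A<0$, and Rellich gives compactness. After replacing $u$ by $|u|$, applying the strong maximum principle and rescaling, this produces a positive solution of $-\Delta u=f_Au^p$. So any valid proof must use criticality essentially.

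The paper does this with a Pohozaev identity for $X=\nabla\phi$. It combines the inner variation $X\cdot\nabla u$ with the test function $(\operatorname{div}X)u$. The relation $\frac12-\frac1q=\frac1n$ then reduces the interior terms to the trace-free Hessian of $\phi$ and to $\Delta^2\phi$. Both vanish on $B_r$ because $\phi=|x-x_0|^2$ there. What remains is $\frac{4A}{q}\int_{B_r}|x-x_0|^2u^q\,dx$, bounded by exterior quantities of order $(P/A)^{2/p}$ and $P^2+bP$, with $b$ the mean of $u$. These are controlled via the mass identity, subharmonic mean values and the Green representation. The restriction $n\ge5$ enters only through the H\"older step $P\le C\bigl(\int_{B_r}|x-x_0|^2u^q\,dx\bigr)^{p/q}$, which needs $\int_{B_r}|x-x_0|^{-2p}\,dx<\infty$, i.e.\ $n>2p$. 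That identity is the missing idea.
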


\begin{corollary}\label{cor:geometry}
For every $n\geq5$, there are smooth functions on $\T^n$ that change sign and have negative integral but are not the scalar curvature of any smooth metric pointwise conformal to $g_0$.
\end{corollary}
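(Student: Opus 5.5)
The corollary should follow directly from Theorem~\ref{thm:main}, using the correspondence \eqref{eq:conformal}--\eqref{eq:normalized} between conformal metrics and positive solutions. The plan is to exhibit an explicit function and then argue by contradiction.

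Fix $n\geq5$ and any admissible $\phi$. A concrete choice is a smooth nonnegative function equal to $|x-x_0|^2$ on $B_r(x_0)$ and strictly positive off $x_0$; for example, patch $|x-x_0|^2$ to a positive constant with a cutoff, keeping positivity on the annulus. Let $A_0$ be the constant given by Theorem~\ref{thm:main} and set $f=f_{A_0}=1-A_0\phi$. The theorem already asserts that $f$ is smooth, changes sign, and has $\int_{\T^n}f\,dV_{g_0}<0$. Only the non-realizability remains.

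Suppose, for contradiction, that $\widetilde g$ is a smooth metric pointwise conformal to $g_0$ with $R_{\widetilde g}=f$. Then $\widetilde g=w^{4/(n-2)}g_0$ for some smooth $w>0$: the conformal factor is a smooth positive function on $\T^n$, so its $(n-2)/4$ power is smooth and positive. Since $R_{g_0}\equiv0$, equation \eqref{eq:conformal} reduces to $-c_n\Delta_{g_0}w=fw^p$. Put $u=c_n^{-1/(p-1)}w$. Then
\[
-\Delta_{g_0}u=c_n^{-1/(p-1)}c_n^{-1}fw^p=c_n^{-p/(p-1)}fw^p=fu^p,
\]
so $u$ is a positive smooth solution of \eqref{eq:main-pde} with $A=A_0$. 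This contradicts Theorem~\ref{thm:main}.

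There is no real obstacle here. The only points to check are that every conformal metric has the form $w^{4/(n-2)}g_0$ with smooth positive $w$, and that the scaling of the constant $c_n$ is consistent. All of the substantive work is contained in Theorem~\ref{thm:main}, which we are taking as given.
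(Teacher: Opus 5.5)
Your proposal is correct and matches the paper's proof: fix an admissible $\phi$, take $f_A$ with $A\geq A_0$, and note that a conformal metric $w^{4/(n-2)}g_0$ with scalar curvature $f_A$ would give the positive solution $u=c_n^{-1/(p-1)}w$ of \eqref{eq:main-pde}, contradicting Theorem~\ref{thm:main}. You also check the scaling of $c_n$ explicitly and sketch why an admissible $\phi$ exists, two points the paper leaves implicit.
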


 For all sufficiently large $A$, the positive set of $f_A$ is exactly $B_{A^{-1/2}}(x_0)$, while $\max f_A=1$. The examples therefore have one localized positive region and a negative background.
The proof uses estimates obtained directly from the equation. The integrated equation controls the nonlinear mass outside a fixed ball. Subharmonic estimates and the Green representation then control the exterior values and gradients of any hypothetical solution. Applying a Pohozaev identity to $X=\nabla\phi$ gives two bounds for a weighted moment which contradicts for large $A$. The exact squared-distance form of $\phi$ makes the error coefficients vanish in the ball where concentration could occur. The dimension threshold enters through
\[
 \int_{B_r(x_0)}|x-x_0|^{-2p}\dd<\infty
 \quad\Longleftrightarrow\quad n>2p,
\]
which, at the critical exponent, holds for integer dimensions precisely when $n\geq5$. No bound on the total energy and no description of possible concentration points is used. The estimates apply uniformly to every hypothetical positive solution.

Section~\ref{sec:estimates} establishes the integral identity and the estimates. Section~\ref{sec:proof} proves the theorem and its geometric corollary. 

\vspace{2ex}
\noindent\textbf{AI assistance.}
The counterexample presented in this paper was suggested by  ChatGPT. The authors have verified its correctness and take full responsibility for the mathematical content of the paper.

\section{Estimates for a hypothetical solution}\label{sec:estimates}

For simplicity, we denote $B_r(x_0)$ by $B_r$ and identify it with the Euclidean ball of radius $r$ centered at the origin. All integrals in this section use the volume form of $g_0$, denoted by $dx$, and $\Delta=\Delta_{g_0}$. Put
\[
 p=\frac{n+2}{n-2},\qquad q=p+1=\frac{2n}{n-2}.
\]
Throughout this section, constants $C$ depend only on the fixed torus, $n$, $\phi$, $r$, and fixed auxiliary cutoffs. They never depend on $A$ or on the hypothetical solution. Their values may change from line to line. Suppose that $A$ is sufficiently large and $u>0$ is a smooth solution of \eqref{eq:main-pde}.

The classical Pohozaev identity is obtained by  testing the equation by the inner variation $X_0\cdot\nabla u$ for $X_0=\frac{1}{2}\nabla |x-x_0|^2$ or the exterior variation $u^p$. The following combines both. 
\begin{lemma}[Pohozaev identity]\label{lem:identity}
Let $n\geq3$, $f\in C^\infty(\T^n)$, and let $u>0$ solve $-\Delta u=fu^p$. For every smooth vector field $X$ on the torus,
\begin{align}\label{eq:vector-identity}
 -\frac1q\int_{\T^n}(Xf)u^q\dd
 =\int_{\T^n}\left[\sym(DX)-\frac{\diver X}{n}g_0\right](\nabla u,\nabla u)\dd -\frac1{2q}\int_{\T^n}\Delta(\diver X)u^2\dd.
\end{align}
\end{lemma}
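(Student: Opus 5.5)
We test the equation $-\Delta u=fu^p$ against two multipliers and combine. The first is the inner variation $X\cdot\nabla u$; the second is the exterior variation $\frac{\diver X}{n}\,u$, used in place of $u^p$ so that the nonlinear terms line up. Everything lives on the closed torus, so every integration by parts has no boundary terms. Smoothness and positivity of $u$ make all integrands smooth.

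\textbf{Step 1: inner variation.} Multiply by $X\cdot\nabla u = X^i\partial_i u$ and integrate. On the right,
\[
fu^pX^i\partial_iu=\tfrac1q fX^i\partial_i(u^q),
\]
and integrating by parts gives
\[
\int fu^p\,Xu=-\frac1q\int \diver(fX)\,u^q=-\frac1q\int (Xf)u^q-\frac1q\int f\,\diver X\,u^q.
\]
On the left,
\[
-\int\Delta u\,(X\cdot\nabla u)=\int \partial_ju\,\partial_j(X^i\partial_iu)=\int DX(\nabla u,\nabla u)+\frac12\int X^i\partial_i|\nabla u|^2 .
\]
A further integration by parts turns the last term into $-\frac12\int\diver X|\nabla u|^2$. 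Hence
\[
\int \sym(DX)(\nabla u,\nabla u)-\frac12\int\diver X\,|\nabla u|^2=-\frac1q\int(Xf)u^q-\frac1q\int f\diver X\,u^q .
\]

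\textbf{Step 2: exterior variation.} Multiply by $\psi u$ with $\psi=\diver X$ and integrate. This gives
\[
\int f\psi u^q=\int \psi|\nabla u|^2+\frac12\int\nabla\psi\cdot\nabla u^2=\int\psi|\nabla u|^2-\frac12\int\Delta\psi\,u^2 .
\]
This identity is used to eliminate $\int f\,\diver X\,u^q$ from Step 1.

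\textbf{Step 3: combination.} Substituting, the gradient coefficient becomes
\[
-\tfrac12+\tfrac1q=-\tfrac12+\tfrac{n-2}{2n}=-\tfrac1n ,
\]
which produces the trace-free tensor $\sym(DX)-\frac{\diver X}{n}g_0$. The remaining term is $-\frac1{2q}\int\Delta(\diver X)u^2$, which appears on the right-hand side. Together these give exactly \eqref{eq:vector-identity}.

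The only delicate point is this coefficient bookkeeping. It works because $\frac1q=\frac{n-2}{2n}$ holds precisely at the critical exponent, and that is where the trace-free structure comes from. There is no analytic obstacle, since there are no boundary terms and no singularities.
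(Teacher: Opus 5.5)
Your proof is correct and follows the paper's argument: test with $X\cdot\nabla u$, test with $(\diver X)u$ to eliminate $\int f(\diver X)u^q$, and combine using $\tfrac12-\tfrac1q=\tfrac1n$. One inconsistency to fix: your introduction announces the multiplier $\frac{\diver X}{n}u$, but Step 2 actually uses $(\diver X)u$; the scalar factor is immaterial, so only the text needs to agree.
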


\begin{proof}  For any smooth vector field $X$,  we set $s=\diver X$ and test the equation by the inner variational field $X\cdot \nabla u$.  Integration by parts gives 
\begin{align*}
\int(-\Delta u)(X\cdot\nabla u)\dd
 &=\int\sym(DX)(\nabla u,\nabla u)\dd
   -\frac12\int s|\nabla u|^2\dd,
\end{align*}
and
\begin{align*}
 \int fu^p(X\cdot\nabla u)\dd
 =-\frac1q\int(Xf+fs)u^q\dd.
\end{align*}
There occurs an exterior variation term $\int fsu^qdx=\int fu^p\cdot su dx$. To cancel this term, test the equation by  the exterior variation field $su$. Again integration by parts implies 
\begin{align*}
 \int fsu^qdx=\int(-\Delta u)su\dd=\int s|\nabla u|^2\dd-\frac12\int(\Delta s)u^2\dd.
\end{align*}
Adding the second identity with coefficient $1/q$ to the first
and using $1/2-1/q=1/n$ yields \eqref{eq:vector-identity}.
\end{proof}

In the setting $f=f_A$, for $X=\nabla\phi$, there holds $Xf_A=-A|\nabla\phi|^2$ and
\[
\sym(DX)-\frac{\diver X}{n}g_0= \Hess\phi-\frac{\Delta\phi}{n}g_0=:Q,\qquad \Delta\diver X=\Delta^2\phi=:\psi.
\]
The identity becomes
\begin{equation}\label{eq:phi-identity}
 \frac A q\int_{\T^n}|\nabla\phi|^2u^q\dd
 =\int_{\T^n}Q(\nabla u,\nabla u)\dd
 -\frac1{2q}\int_{\T^n}\psi u^2\dd.
\end{equation}
On $B_r$ one has $\Hess\phi=2g_0$, $\Delta\phi=2n$, $\psi=0$, and $|\nabla\phi|^2=4|x|^2$. Therefore $Q$ and $\psi$ vanish on $B_r$, and
\eqref{eq:phi-identity} implies
\begin{equation}\label{eq: localize Pohozaev inequality}
\frac{4A}{q} \int_{B_r}|x|^2 u^qdx\le \int_{\T^n\backslash B_r}\left(Q(\nabla u,\nabla u)-\frac{1}{2q}\psi u^2\right)dx.
\end{equation}

Define
\begin{equation*}
 W=\int_{B_r}|x|^2u^q\dd,\qquad P=\int_{\T^n}u^p\dd,\qquad b=\av_{\T^n}u,\qquad
 F=f_Au^p,
\end{equation*}
where $\av$ denotes the average. 
The following two  lemmas will  dominate  the two sides of \eqref{eq: localize Pohozaev inequality} by $P$ from above and below respectively.

\begin{lemma}[Mass and exterior estimates]\label{lem:exterior}
We have
\begin{equation}\label{eq:mass}
 \int_{\T^n}F\dd=0,\qquad A\int_{\T^n}\phi u^p\dd=P,
 \qquad\|F\|_{L^1(\T^n)}\leq2P.
\end{equation}
For all sufficiently large $A$, one has
\begin{align}
 u&\leq C(P/A)^{1/p}
       &&\text{on }\T^n\setminus B_{r/4},\label{eq:outer-value}\\
 |F|&\leq CP
       &&\text{on }\T^n\setminus B_{r/4},\label{eq:outer-source}\\
 |u-b|+|\nabla u|&\leq CP
       &&\text{on }\T^n\setminus B_r,\label{eq:outer-oscillation}\\
 b&\leq C(P/A)^{1/p}+CP.\label{eq:average-bound}
\end{align}
\end{lemma}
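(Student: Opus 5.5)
The mass identities in \eqref{eq:mass} come first and are elementary. Integrating $-\Delta u=F$ over the closed torus gives $\int F\dd=0$. Since $F=u^p-A\phi u^p$, this says $A\int\phi u^p\dd=\int u^p\dd=P$. Then
\[
\|F\|_{L^1}\le\int u^p\dd+A\int\phi u^p\dd=2P .
\]

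For the exterior value bound \eqref{eq:outer-value}, I use that $\phi\ge c_0>0$ on $\T^n\setminus B_{r/8}$, because $\phi$ vanishes only at $x_0$. For $A$ large, $f_A\le 1-Ac_0\le -Ac_0/2$ there, so $-\Delta u=f_Au^p\le 0$ on $\T^n\setminus B_{r/8}$ and $u$ is subharmonic on that region. The mean value inequality on balls of a fixed radius $\rho=r/8$ centered at points of $\T^n\setminus B_{r/4}$ gives $u(y)\le C\int_{B_\rho(y)}u\dd$. Jensen or Hölder with exponent $p$ then gives $u(y)^p\le C\int_{B_\rho(y)}u^p\dd$. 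On $B_\rho(y)\subset \T^n\setminus B_{r/8}$ we have $u^p\le c_0^{-1}\phi u^p$, so
\[
u(y)^p\le C\int\phi u^p\dd=CP/A,
\]
which is \eqref{eq:outer-value}. Consequently, on $\T^n\setminus B_{r/4}$,
\[
|F|\le (1+A\|\phi\|_\infty)u^p\le CA\cdot P/A=CP,
\]
which is \eqref{eq:outer-source}.

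For \eqref{eq:outer-oscillation}, I use the Green representation on the torus:
\[
u(y)-b=\int G(y,z)F(z)\,dz,
\]
where $G$ is the Green function of $-\Delta$ with zero mean and $|G|\le C|y-z|^{2-n}$, $|\nabla_yG|\le C|y-z|^{1-n}$. For $y\notin B_r$, I split the integral into $z\in B_{r/4}$ and $z\notin B_{r/4}$.
\begin{itemize}
\item For $z\in B_{r/4}$, the distance $|y-z|\ge 3r/4$, so $G$ and $\nabla G$ are bounded. This part is bounded by $C\|F\|_{L^1}\le CP$.
\item For $z\notin B_{r/4}$, I use $|F|\le CP$ and the local integrability of $|y-z|^{2-n}$ and $|y-z|^{1-n}$. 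This again gives $CP$.
\end{itemize}
Hence $|u-b|+|\nabla u|\le CP$ on $\T^n\setminus B_r$. Finally, \eqref{eq:average-bound} follows by evaluating at any point $y\notin B_r$:
\[
b\le u(y)+CP\le C(P/A)^{1/p}+CP.
\]

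The main obstacle is the value bound \eqref{eq:outer-value}. The interior ball $B_{r/4}$, where $u$ may concentrate, must be excluded. The subharmonicity region must contain a full fixed-radius ball around each exterior point, which is why I shrink to $r/8$. One also has to check that the mean value inequality passes from $u$ to $u^p$. This works because $u^p$ is itself subharmonic for $p\ge1$ (a convex increasing function of a subharmonic function), so one can apply the mean value inequality to $u^p$ directly.
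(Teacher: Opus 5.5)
Your proof is correct and follows the paper's argument essentially step for step. The steps match: the integrated equation gives the mass identities; subharmonicity of $u$ on fixed-radius balls away from $x_0$, combined with H\"older and $A\int\phi u^p\,dx=P$, gives the exterior value bound; and the Green representation, split into a near-$x_0$ piece controlled by $\|F\|_{L^1}$ and an exterior piece controlled by the pointwise source bound, gives the oscillation and average bounds. The remaining differences are only choices of constants (your radius $r/8$ versus the paper's $d<r/16$, and splitting the Green integral at $B_{r/4}$ rather than $B_{r/2}$), and your choices are valid since $\overline{B_{r/8}(y)}$ avoids $B_{r/8}(x_0)$ whenever $y\notin B_{r/4}$.
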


\begin{proof}
Integrating both sides of \eqref{eq:main-pde}, we get the first two identities in \eqref{eq:mass}. So, $f_A=1-A\phi$ gives  $\|F\|_{L^1(\T^n)}\le P+A\int_{\T^n}\phi u^p= 2P$,  the last bound in \eqref{eq:mass}.

Choose a fixed radius $d>0$, which is smaller than $r/16$ and the injectivity radius. For every $x\notin B_{r/4}$, the ball $B_d(x)$ is Euclidean and lies outside $B_{r/8}$. Let $c_0=\min_{\T^n\setminus B_{r/8}}\phi>0$. For $A\geq2/c_0$, equation \eqref{eq:main-pde} gives
\[
 \Delta u=(A\phi-1)u^p\geq0\quad\text{on }B_d(x).
\]
The mean-value inequality for subharmonic functions and H\"older's inequality imply
\[
 u(x)\leq\av_{B_d(x)}u
 \leq\bigl(\av_{B_d(x)}u^p\bigr)^{1/p}
 \leq\left(\frac{P}{A c_0|B_d|}\right)^{1/p}.
\]
Here the last step uses $A\int\phi u^p=P$. Since $d$ is fixed, this proves  \eqref{eq:outer-value}. Since $|f_A|\leq CA$ for $A\geq1$, this also proves \eqref{eq:outer-source}.

Let $G(x,y)$ be the mean-zero Green kernel for $-\Delta$ on the torus. The kernels $G$ and $\nabla_xG$ are integrable in $y$, uniformly in $x$, and are bounded on fixed sets separated from the diagonal. Since $\int F=0$,
\begin{equation}\label{eq:representation}
 u(x)-b=\int_{\T^n}G(x,y)F(y) d y.
\end{equation}
For $x\notin B_r$, split the integral into $y\in B_{r/2}$ and $y\notin B_{r/2}$. In the first part, $d(x,y)\geq r/2$, so boundedness of the kernel and \eqref{eq:mass} give a bound $CP$. In the second part, use \eqref{eq:outer-source} and kernel integrability. The same split applies to the differentiated representation with $\nabla_xG$. This proves \eqref{eq:outer-oscillation}. Finally, evaluating at one fixed point outside $B_r$, \eqref{eq:outer-value} and \eqref{eq:outer-oscillation} give \eqref{eq:average-bound}.
\end{proof}

\begin{lemma}[Weighted mass estimate]\label{lem:weighted}
For $n\geq5$ and $A$ sufficiently large,
\begin{equation}\label{eq:weighted-mass}
 P\leq CW^{p/q}.
\end{equation}
\end{lemma}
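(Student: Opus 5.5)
The plan is to split $P=\int_{\T^n}u^p\dd$ into the contribution of $B_r$ and that of its complement. On $B_r$ I will use H\"older's inequality against the weight $|x|^2$ that appears in $W$. Outside $B_r$ I will use the pointwise bound \eqref{eq:outer-value} from Lemma~\ref{lem:exterior}, which makes that piece a small multiple of $P$. That small piece is then absorbed into the left-hand side.

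\emph{Exterior part.} Since $\T^n\setminus B_r\subset\T^n\setminus B_{r/4}$, the bound \eqref{eq:outer-value} gives $u^p\le C P/A$ there. The torus has volume one, so
\[
 \int_{\T^n\setminus B_r}u^p\dd\le \frac{C}{A}\,P .
\]

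\emph{Interior part.} On $B_r$ write $u^p=\bigl(|x|^2u^q\bigr)^{p/q}\,|x|^{-2p/q}$. Apply H\"older with exponents $q/p$ and $q/(q-p)=q$, using $q-p=1$:
\[
 \int_{B_r}u^p\dd\le\Bigl(\int_{B_r}|x|^2u^q\dd\Bigr)^{p/q}\Bigl(\int_{B_r}|x|^{-2p}\dd\Bigr)^{1/q}
 = W^{p/q}\Bigl(\int_{B_r}|x|^{-2p}\dd\Bigr)^{1/q}.
\]
The last integral is finite exactly when $n>2p=2(n+2)/(n-2)$, which is equivalent to $n^2-4n-4>0$. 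Among integers this holds precisely for $n\ge5$. This is the only point where the dimension restriction enters, as anticipated in the introduction. Its value depends only on $n$ and $r$.

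\emph{Conclusion.} Adding the two parts gives
\[
 P\le C\,W^{p/q}+\frac{C}{A}P .
\]
For $A\ge 2C$ the last term is at most $P/2$ and can be absorbed, yielding $P\le 2C\,W^{p/q}$, which is \eqref{eq:weighted-mass}. There is no real obstacle here. The only care needed is that the constant in \eqref{eq:outer-value} is independent of $A$ and $u$, which Lemma~\ref{lem:exterior} guarantees. That independence is what makes the absorption legitimate for all sufficiently large $A$.
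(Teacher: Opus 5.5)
Your proof is correct and follows the paper's argument: you split $P$ over $B_r$ and its complement, absorb the exterior piece for large $A$, and apply H\"older with exponents $q/p$ and $q$ on $B_r$, where $n>2p$ supplies the dimension restriction. The only difference is that the paper bounds the exterior piece directly from the mass identity $A\int_{\T^n}\phi u^p\dd=P$, giving $\int_{\T^n\setminus B_r}u^p\dd\le P/(Ac)$ with $c=\min_{\T^n\setminus B_r}\phi$, whereas you use the pointwise bound \eqref{eq:outer-value}, which is itself derived from that identity. Your route is therefore valid but slightly less direct.
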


\begin{proof}
Let $c=\min_{\T^n\setminus B_r}\phi>0$. By \eqref{eq:mass},
\[
 \int_{\T^n\setminus B_r}u^p\dd\leq\frac{P}{Ac}.
\]
For $A\geq2/c$, from $P=\int_{\T^n\setminus B_r}u^p\dd+\int_{B_r}u^p\dd$, it follows that $P\leq2\int_{B_r}u^p\dd$. As $q-p=1$, H\"older's inequality with exponents $q/p$ and $q$ gives
\begin{align*}
 \int_{B_r}u^p\dd
 =\int_{B_r}(|x|^2u^q)^{p/q}|x|^{-2p/q}\dd\leq W^{p/q}\left(\int_{B_r}|x|^{-2p}\dd\right)^{1/q}.
\end{align*}
The last integral  is finite if and only if $n>2p$. With  $p=\frac{n+2}{n-2}$, this condition is equivalent to $n^2-4n-4>0$, which holds for every integer $n\geq5$. This proves \eqref{eq:weighted-mass}.
\end{proof}

\section{Proof of the nonexistence theorem}\label{sec:proof}

\begin{proof}[Proof of Theorem~\ref{thm:main}]
 Take $A$ large enough for the preceding estimates to hold.  
First obtain a coarse bound for $W$. Choose a fixed smooth cutoff $\eta$ equal to zero on $B_{r/4}$ and one outside $B_{r/2}$. Its support lies in $\{f_A<0\}$ for all sufficiently large $A$. Testing the equation by $\eta^2u$ gives
\[
 \int\eta^2|\nabla u|^2\dd
 \leq2\left|\int\eta u\nabla\eta\cdot\nabla u\dd\right|
 \leq\frac12\int\eta^2|\nabla u|^2\dd
     +2\int|\nabla\eta|^2u^2\dd.
\]
Using \eqref{eq:outer-value} on the support of $\nabla\eta$  and $\eta$ yields
\begin{equation*}
 \int\eta^2\left(|\nabla u|^2+u^2\right)\dd\leq C(P/A)^{2/p}.
\end{equation*}
Since  $\eta=1$ on $\T^n\backslash B_r$, \eqref{eq: localize Pohozaev inequality} implies
\begin{equation*}
 AW\leq C(P/A)^{2/p}.
\end{equation*}
Applying Lemma~\ref{lem:weighted} and  $1-2/q=2/n$, this gives
\begin{equation}\label{eq:coarse}
 W^{2/n}\leq CA^{-1-2/p}.
\end{equation}

Next refine the estimate in \eqref{eq:phi-identity} by removing the constant part of $u$. By \eqref{eq:outer-oscillation},
\[
 \left|\int Q(\nabla u,\nabla u)\dd\right|\leq CP^2.
\]
Since the torus is closed, $\int\psi\dd=\int\Delta^2\phi\dd=0$. Thus
\begin{align*}
 \left|\int\psi u^2\dd\right|
 =\left|\int\psi\bigl[(u-b)^2+2b(u-b)\bigr]\dd\right|\leq C(P^2+bP),
\end{align*}
where \eqref{eq:outer-oscillation} is applied on the support of $\psi$. It follows from \eqref{eq:phi-identity} and \eqref{eq:average-bound} that
\begin{equation*}
 AW\leq C(P^2+bP)
 \leq CP^2+CA^{-1/p}P^{1+1/p}.
\end{equation*}
Using \eqref{eq:weighted-mass} and $1+1/p=q/p$, this gives
\[
 AW\leq CW^{2p/q}+CA^{-1/p}W.
\]
Divide by $W>0$ and use $2p/q-1=2/n$, followed by \eqref{eq:coarse}. We  obtain
\begin{equation}\label{eq:contradiction}
 A\leq CW^{2/n}+CA^{-1/p}
 \leq CA^{-1-2/p}+CA^{-1/p},
\end{equation}
A contradiction to the existence of a positive solution exists for all sufficiently large $A$.

Finally, $f_A(x_0)=1$ and $\phi$ is strictly positive away from $x_0$, so $f_A$ changes sign and
\[
 \int_{\T^n}f_A\dd=1-A\int_{\T^n}\phi\dd<0
\]
for all large $A$. 
\end{proof}

\begin{proof}[Proof of Corollary~\ref{cor:geometry}]
Choose an admissible $\phi$. If  $\widetilde g=w^{4/(n-2)}g_0$ had scalar curvature $f_A$, then $-c_n\Delta w=f_Aw^p$ and $u=c_n^{-1/(p-1)}w$ would  solve \eqref{eq:main-pde}, contradicting Theorem~\ref{thm:main}.
\end{proof}

\begin{remark}
The exact form $\phi=|x|^2$ near its zero is used to make both $Q$ and $\Delta^2\phi$ vanish in the region where a hypothetical solution could concentrate. The argument does not replace this assumption by an arbitrary nondegenerate quadratic minimum. In particular, an anisotropic quadratic form need not have vanishing trace-free Hessian.
\end{remark}

\begin{remark}
The only restriction to $n\geq5$ occurs in Lemma~\ref{lem:weighted}. In dimension five, $n-2p=1/3>0$, so the weighted integral is finite. It diverges in dimensions three and four. Thus the argument makes no nonexistence assertion in those dimensions and is compatible with the result of Escobar and Schoen \cite{ES86}.
\end{remark}

\end{document}